\documentclass[12pt,a4paper]{article}

\usepackage[latin1]{inputenc}
\usepackage[english]{babel}
\parindent0cm
\addtolength{\topmargin}{-1.7cm}
\addtolength{\parskip}{0.2cm}
\textheight24cm
\addtolength{\textwidth}{1cm}
\addtolength{\oddsidemargin}{-0.5cm}

\usepackage[usenames, dvipsnames]{xcolor}
\usepackage{amssymb}
\usepackage{amsfonts}
\usepackage{amsmath}
\usepackage{amsthm}
\usepackage{epsfig}
\usepackage{graphics}
\usepackage{dsfont}

\usepackage{verbatim}

\usepackage{latexsym}

\textwidth16cm
\textheight23cm
\topmargin-1cm
\oddsidemargin0cm
\evensidemargin0cm
\frenchspacing

\bibliographystyle{plain}

\newtheorem{theorem}{Theorem}[section]

\newtheorem{remark}[theorem]{Remark}

\newtheorem{proposition}[theorem]{Proposition}

\allowdisplaybreaks[4]


\setlength{\unitlength}{1cm}

\newcommand{\bN}{\mathbb{N}}
\newcommand{\bR}{\mathbb{R}}
\newcommand{\bZ}{\mathbb{Z}}
\newcommand{\bC}{\mathbb{C}}
\newcommand{\law}{\mathcal{L}}
\newcommand{\bQ}{\mathbb{Q}}
\newcommand{\di}{\mathrm{d}}
\newcommand{\ri}{\mathrm{i}}
\newcommand{\re}{\mathrm{e}}

\newcommand{\one}{\mathbf{1}}

\begin{document}
\title{A Cram\'er--Wold device for infinite divisibility of  $\bZ^d$-valued distributions\thanks{This research was supported by DFG grant LI-1026/6-1. Financial support is gratefully acknowledged.}}
\author{David Berger\thanks{Technische Universit\"at Dresden,
Institut f\"ur Mathematische Stochastik, D-01062 Dresden, Germany, email: david.berger2@tu-dresden.de} \and Alexander Lindner\thanks{Ulm University, Institute of Mathematical Finance, D-89081 Ulm, Germany;
email:
alexander.lindner@uni-ulm.de}}
\date{\today}
\maketitle

\begin{abstract}
We show that a Cram\'er--Wold device holds for infinite divisibility of $\bZ^d$-valued distributions, i.e. that the distribution of a $\bZ^d$-valued random vector $X$ is infinitely divisible if and only if $\law(a^T X)$ is infinitely divisible for all $a\in \bR^d$, and that this in turn is equivalent to infinite divisibility of $\law(a^T X)$ for all $a\in \bN_0^d$. A key tool for proving this is a L\'evy--Khintchine type representation with a signed L\'evy measure for the characteristic function of a $\bZ^d$-valued distribution, provided the characteristic function is zero-free.
\end{abstract}

2020 {\sl Mathematics subject classification.} 60E07\\
{\sl Key words and phrases.}  Cram\'er--Wold device, infinitely divisible distribution, signed L\'evy measure, quasi-infinitely divisible distribution.

\section{Introduction}

In 1936, Cram\'er and Wold \cite{CramerWold1936} showed that a probability distribution on $\bR^d$ is uniquely determined by its one-dimensional projections, or in other words, the distribution of an $\bR^d$-valued random vector $X$ is determined by the distributions of all its linear combinations $a^T X$ when $a$ ranges in $\bR^d$. As a consequence, a sequence $(X_n)_{n\in \bN}$ of $\bR^d$-dimensional random vectors converges in distribution to an $\bR^d$-valued random vector $X$ if and only if $a^T X_n$ converges in distribution to $a^T X$ for all $a\in \bR^d$. These facts are the (classical)  Cram\'er--Wold device. It has become custom to call statements for multivariate random vectors that can be expressed by corresponding statements for the linear combinations also as Cram\'er--Wold devices.
For example, if $X$ is a random vector in $\bR^d$, then it is strictly stable if and only if $a^T X$ is strictly stable for all $a\in \bR^d$, it is symmetric stable if and only if $a^T X$ is symmetric stable for all $a\in \bR^d$, and it is stable with stability index greater than or equal to 1 if and only if the same is true for $a^T X$ for all $a\in \bR^d$, see e.g. \cite[Thm. 2.1.5]{SamorodnitskyTaqqu1994}. All these statements can be called a Cram\'er--Wold device for the corresponding property. It is however not true that a distribution on $\bR^d$ is stable if all its linear combinations $a^T X$ are stable, see e.g. \cite[Section 2.2]{SamorodnitskyTaqqu1994}.
Another example is given by multivariate regular variation at infinity of random vectors: Basrak et al. \cite{Basrak} showed that a Cram\'er--Wold device holds when the regular variation index is not an integer, and Hult and Lindskog \cite{Hult} gave a counter example when the regular variation index is an integer.

In this paper, we shall be interested in infinite divisibility of random vectors. If a random vector $X$ is infinitely divisible, then obviously $a^T X$ is infinitely divisible, too, for all $a\in \bR^d$, but the converse is not true. Counter examples include one by Dwass and Teicher \cite{DT57} who considered a three-dimensional Wishart distribution, and a two-dimensional distribution constructed by Ibragimov \cite{Ibragimov72}.
The example of Ibragimov is such that it is not infinitely divisible but quasi--infinitely divisible, meaning that the characteristic function of it has a L\'evy--Khintchine type representation with a \lq signed L\'evy measure\rq~rather than a L\'evy measure. In view of the existence of counter examples to the general problem, it is natural to ask if a Cram\'er--Wold device holds for subclasses of infinitely divisible distributions, as it does for symmetric stable random vectors or strictly stable random vectors as seen above. In this paper, we shall consider $\bZ^d$-valued distributions. Our main goal is to show that a Cram\'er--Wold device holds for such distributions, i.e. that (the distribution of) a $\bZ^d$-valued random vector $X$ is infinitely divisible if and only if $a^T X$ is infinitely divisible for all $a\in \bR^d$. Since $a^T X$ may no longer be  integer valued when $a\notin \bZ^d$, we also show that a $\bZ^d$-valued  random vector $X$ is infinitely divisible if and only if $a^T X$ is infinitely divisible for all $a\in \bN_0^d$, so that one does not leave the class of integer--valued distributions.

A crucial tool for proving the above mentioned  Cram\'er--Wold device in Section \ref{S4} will be to find a L\'evy--Khintchine type representation for the characteristic function of $\bZ^d$-valued probability distributions with a \lq signed L\'evy measure\rq~rather than a L\'evy measure. Such a representation will be found in Theorem \ref{discrete}. Distributions with L\'evy--Khintchine type representation of the characteristic function with a signed L\'evy measure are called \emph{quasi-infinitely divisible distributions}. Recently, they have attracted some attention and found new applications, in particular in one dimension, see e.g. \cite{Berger, ChhaibaDemniMouayn2016, DemniMouayn2015, Khartov2019, Lindner, Nakamura15, Passeggeri2020a, ZhangLiuLi}, to name just a few, but they have already appeared in the 1960s and 70s with works by Linnik \cite{Linnik64},  Cuppens \cite{Cuppens75} and others, some of these early works being also multivariate.   Theorem \ref{discrete} of the present paper can be seen as a multivariate extension of Theorem 8.1 in the recent paper \cite{Lindner}.

The paper is organised as follows. In the next section, we will set some notation and recall some definitions. In Section \ref{S3} we will state and prove the previously stated result about the representation of a zero--free characteristic function of a $\bZ^d$-valued distribution.  With that at hand in Section \ref{S4} we will be able  to get the desired Cram\'er--Wold device for infinite divisibility of $\bZ^d$--valued distributions.

\section{Notation and preliminaries}

We write $\bN = \{1,2,3,\ldots \}$, $\bN_0 = \bN \cup \{0\}$ and $\bZ, \bQ,\bR,\bC$ for the integers, the rational numbers, the real numbers and the complex numbers, respectively. The imaginary part of a complex number $z\in \bC$ will be denoted by $\Im (z)$, the complex conjugate of $z$ by $\overline{z}$.

The Euclidian norm in $\bR^d$ is denoted by $|\cdot|$, we write $a\wedge b = \min \{a,b\}$ for $a,b\in \bR$ and the indicator function of a set $A\subset \bR^d$ is denoted by $\one_A$. The canonical basis in $\bR^d$ will be denoted by $e_1,\ldots, e_d$, where $e_i$ is the $i$'th unit vector having all its entries zero apart from the $i$'th entry which is 1. The Dirac measure at a point $x\in \bR^d$ will be denoted by $\delta_x$. When we speak of a measure or signed measure on $\bR^d$, we will always mean it to be defined on the Borel-$\sigma$-algebra of $\bR^d$. The convolution of two probability measures $\mu_1$ and $\mu_2$ on $\bR^d$ will be denoted by $\mu_1\ast \mu_2$.

Throughout, vectors $a$ in $\bR^d$ will be column vectors, and by $a^T$ we denote the transpose of a vector or matrix.  The law (i.e. distribution) of a random vector $X$ will be denoted by $\law(X)$.
The characteristic function of a probability measure $\mu$ on $\bR^d$ (with the Borel-set as $\sigma$-algebra) is given by
$$\varphi_\mu(z) = \widehat{\mu}(z) = \int_{\bR^d} \re^{\ri \langle z, x\rangle} \, \mu(\di x), \quad z\in \bR^d,$$
where $\langle z, x \rangle= z^T x$ denotes the Euclidian inner product on $\bR^d$.
When $X$ is a random vector we also write $\varphi_X$ instead of $\varphi_{\law(X)}$.
By definition, a distribution $\mu$ on $\bR^d$ is infinitely divisible, if and only if for every $n\in \bN$ there exists some probability distribution $\mu_n$ on $\bR^d$ such that $\mu = \mu_n^{\ast n}$, the $n$-fold convolution of $\mu_n$ with itself. By the L\'evy--Khintchine formula, a distribution $\mu$ on $\bR^d$ is infinitely divisible if and only if its characteristic function admits a representation of the form
\begin{equation} \label{eq-LK}
\widehat{\mu}(z)  = \exp \left\{ \ri \gamma^T z - \frac12 z^T A z + \int_{\bR^d} \left( \re^{\ri z^T x} - 1 - \ri z^T x \one_{[-1,1]} (|x|) \right) \, \nu(\di x)\right\}
\end{equation}
with a symmetric positive semidefinite matrix $A\in \bR^{d\times d}$, a constant $\gamma\in \bR^d$ and a L\'evy measure $\nu$ on $\bR^d$, i.e. a measure satisfying $\nu(\{0\}) = 0$ and $\int_{\bR^d} (|x|^2 \wedge 1)\, \nu(\di x) < \infty$. The triplet $(A,\nu,\gamma)$ is then unique and called the characteristic triplet of $\mu$. For this and further information regarding infinitely divisible distributions we refer to Sato \cite{Sato}.

\section{A representation for the characteristic function of a $\bZ^d$-valued distribution} \label{S3}
\setcounter{equation}{0}

In this section we show that if the characteristic function of a $\bZ^d$-valued distribution has no zeroes, then it has  a L\'evy--Khintchine type representation with a finite signed \lq L\'evy measure\rq. This generalises a corresponding one-dimensional result in \cite[Thm. 8.1]{Lindner} to the multivariate setting. 

Let us first recall some definitions and facts. A function $f:\bR^d \to \bC$ is called \emph{$2\pi$-periodic in all coordinates}, if $f(z) = f(z + 2\pi e_i)$ for all $z\in \bR^d$ and $i\in \{1,\ldots, d\}$, where $e_1, \ldots, e_d$ denotes the canonical basis in $\bR^d$. A function which is $2\pi$-periodic in all coordinates can be identified with a function on the torus $\bR^d / (2\pi \bZ^d)$, and it will be continuous if the corresponding identification on the torus is continuous. The $d$-dimensional Wiener algebra $W^d$ consists of all continuous functions $f:\bR^d \to \bC$ which are $2\pi$-periodic in all coordinates and can be represented in the form $f(z) = \sum_{n\in \bZ^d} a_n \re^{\ri \langle n, z \rangle}$ with absolutely summable Fourier coefficients $(a_n)_{n\in \bZ^d}$. Since $W^d$ is an algebra, the product of two functions in $W^d$ is again in $W^d$. The Wiener-L\'evy theorem for Fourier series in several variables then states that whenever $f\in W^d$ and $h:D \to \bC$ is a holomorphic function in an open neighbourhood of $f(\bR^d)=f([0,2\pi]^d)$, then also $h\circ f \in W^d$; see e.g. Rudin  \cite[Thm. 6.2.4, p. 133]{Rudin}, where this is stated in the general context of Fourier analysis on locally compact abelian groups, and the version we need follows from the observation that the dual group of $G= \bZ^d$ is the torus $\Gamma=\bR^d / (2\pi \bZ^d)$.

Let us also recall that to  every continuous function $f:\bR^d \to \bC$ satisfying $f(0) = 1$ and $f(z) \neq 0$ for all $z\in \bR^d$ there exists a unique continuous function $\psi:\bR^d \to \bC$ satisfying $\psi(0) = 0$ and $\re^{\psi(z)} = f(z)$ for all $z\in \bR^d$; this unique function $\psi$ is called the \emph{distinguished logarithm} of $f$, see e.g.
\cite[Lem.~7.6]{Sato}.

In order to apply the Wiener--L\'evy theorem to the distinguished logarithm under certain conditions, we have to shift it into the right half of the complex plane, where we can use the principal branch of the logarithm. This is the contents of the next result, which is an extension to the multivariate setting of a corresponding result by Calderon et al.~\cite[Sect.~2, Lemma]{Calderon}.

\begin{proposition}\label{prop1}
Let $f:\bR^d\to \bC$ be an element in $W^d$ such that $f(z)\neq 0$ for all $z\in\bR^d$. Denote by $\psi$ the distinguished logarithm of  $f$ and assume that  $\psi$ is $2\pi$-periodic in all coordinates. Then $\psi\in W^d$.
\end{proposition}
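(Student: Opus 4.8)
The plan is to reduce everything to the situation where a single branch of the logarithm is available, so that the Wiener--L\'evy theorem can be applied. The obstruction to using it directly is that $\mathrm{Log}\circ f$ is not a legitimate composition: although $f\in W^d$ is zero-free, its image $f(\bR^d)=f([0,2\pi]^d)$ is only known to be a compact subset of $\bC\setminus\{0\}$, which may wind around the origin, so there need be no open neighbourhood of $f(\bR^d)$ on which a holomorphic branch of $\log$ exists. The remedy, following the hint about shifting into the right half-plane, is to peel off a good approximation of $\psi$ before taking any logarithm.

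Concretely, I would first use that $\psi$ is continuous and $2\pi$-periodic, hence uniformly continuous on the torus, to approximate it uniformly by a trigonometric polynomial. By Fej\'er's theorem (or Stone--Weierstrass) there is a finite sum $P(z)=\sum_{n\in F}c_n\re^{\ri\langle n,z\rangle}$, $F\subset\bZ^d$ finite, with $\sup_{z\in\bR^d}|\psi(z)-P(z)|<\pi/2$; such a $P$ lies in $W^d$ trivially. I then set $g:=f\,\re^{-P}$. Since $\exp$ is entire, the Wiener--L\'evy theorem gives $\re^{-P}=\exp\circ(-P)\in W^d$, and as $W^d$ is an algebra, $g\in W^d$. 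Writing $g=\re^{\psi-P}$ and noting $|\Im(\psi-P)|<\pi/2$ shows that $g$ maps $\bR^d$ into the open right half-plane $\{\Re w>0\}$.

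Now the principal logarithm $\mathrm{Log}$ is holomorphic on $\bC\setminus(-\infty,0]$, hence on an open neighbourhood of the compact set $g(\bR^d)$, so a second application of Wiener--L\'evy yields $\mathrm{Log}\circ g\in W^d$. The final step is a branch-bookkeeping identification: since $\psi-P$ is continuous with imaginary part in $(-\pi/2,\pi/2)$ and $\re^{\psi-P}=g$, it must coincide with $\mathrm{Log}\circ g$, because two continuous logarithms of $g$ on the connected set $\bR^d$ differ by a constant integer multiple of $2\pi\ri$, and the imaginary-part bounds force that integer to be $0$. Hence $\psi-P\in W^d$, and therefore $\psi=(\psi-P)+P\in W^d$, as desired.

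The step I expect to be the genuine obstacle is the first conceptual one: recognising that $\psi$ itself cannot be produced by a single pass through Wiener--L\'evy, and that the periodicity hypothesis is precisely what allows one both to approximate $\psi$ by a \emph{periodic} trigonometric polynomial and to keep $\psi-P$ in the periodic class on which $W^d$ lives. The remaining points---that trigonometric polynomials and $\exp\circ(-P)$ lie in $W^d$ (absolutely summable Fourier coefficients), and the branch identification---are routine once the half-plane reduction is in place.
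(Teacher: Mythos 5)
Your proof is correct and takes essentially the same route as the paper's: both approximate $\psi$ within $\pi/2$ by a trigonometric polynomial $p\in W^d$, observe via the Wiener--L\'evy theorem and the algebra property that $f\re^{-p}=\re^{\psi-p}\in W^d$ takes values in the open right half-plane, and then apply Wiener--L\'evy once more with the principal logarithm to conclude $\psi-p\in W^d$, hence $\psi\in W^d$. Your explicit branch-bookkeeping argument (two continuous logarithms on the connected set $\bR^d$ differ by a constant in $2\pi\ri\bZ$, forced to vanish by the imaginary-part bounds) is simply a spelled-out justification of the identification the paper states in one line.
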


\begin{proof}
Since $\psi$ is continuous and $2\pi$-periodic in every coordinate,   by the Weierstrass approximation theorem with trigonometric polynomials (e.g. \cite[Cor. 3.2.2., p.~183]{Grafakos}) there exists a finite subset $\Gamma\subset \bZ^d$ and a  trigonometric polynomial $p(z) = \sum_{k\in \Gamma} b_k \re^{\ri \langle k , z \rangle}$ with $b_k \in \bC$ such that $|\psi(z) - p(z)| < \pi/2$ for all $z\in \bR^d$. By the Wiener--L\'evy theorem applied to the exponential function we conclude that $\bR^d \ni z \mapsto \re^{- p(z)}$ is in $W^d$. Since $W^d$ is closed under multiplication, we conclude that also
\begin{align*}
\widetilde{f}: \bR^d \to \bC, \quad z \mapsto \widetilde{f}(z)=\re^{- p(z)} f(z) = \exp \left\{- p(z) + \psi(z)\right\}
\end{align*}
is in $W^d$. Since $\Im (\psi(z) -  p(z))\in (-\pi/2,\pi/2)$ we see that $\widetilde{f}(z)$ is in the right-half plane for every $z\in \bR^d$. It follows that $\bR^d \ni z\mapsto \psi(z) -  p(z)$ coincides with the principal branch of the complex logarithm applied to $\widetilde{f}$, so that  the Wiener--L\'evy theorem applied with this branch shows that $\bR^d \ni z \mapsto \psi(z) -  p(z)$ is in $W^d$. Since $\bR^d \ni z \mapsto  p(z)$ is in $W^d$ as a trigonometric polynomial, so must be $\psi$.
\end{proof}

We can now state and prove the aforementioned representation of the characteristic function of a $\bZ^d$-valued distribution, provided it is zero-free. Recall that a distribution $\mu$ on $\bR^d$ is a compound Poisson distribution with jump size distribution $\sigma$, where $\sigma$ is a probability measure on $\bR^d \setminus \{0\}$, if the characteristic function of $\mu$ has the form $\bR^d \ni z \mapsto \widehat{\mu}(z) = \exp \{ \int_{\bR^d} (\re^{\ri \langle z, x \rangle} - 1 ) \, \lambda \sigma(\di x)\}$ with some $\lambda>0$, cf. \cite[Def. 4.1]{Sato}. The constant $\lambda$ will be called the jump rate of $\mu$ (more precisely, one should say the jump rate of a compound Poisson process associated with $\mu$).

\begin{theorem} \label{discrete}
Let $\mu$ be a probability distribution on $\bZ^d$ with characteristic function $\widehat{\mu}$. Then the following are equivalent:
\begin{enumerate}
\item[(i)] $\widehat{\mu}(z) \neq 0$ for all $z\in \bR^d$.
\item[(ii)] There exist $k\in \bZ^d$ and a finite signed measure $\nu$ on $\bR^d$ with support contained in $\bZ^d \setminus \{0\}$  such that
\begin{equation} \label{eq-rep}
\widehat{\mu}(z) = \exp \left\{ \ri \langle k, z \rangle + \int_{\bR^d} \left( \re^{\ri \langle z, x \rangle} - 1 \right) \, \nu(\di x) \right\} \quad \forall\; z\in \bR^d.
\end{equation}
 \item[(iii)] There exist $k\in \bZ^d$ and  two compound Poisson distributions $\mu_1$ and $\mu_2$ with jump size distribution supported in $\bZ^d \setminus \{0\}$ such that
$$\widehat{\mu}(z) = \re^{\ri \langle k , z \rangle} \frac{\widehat{\mu}_1(z)}{\widehat{\mu}_2(z)} \quad \forall\; z\in \bR^d.$$
 \item[(iv)] There exist two infinitely divisible distributions $\mu_3$ and $\mu_4$ such that $\widehat{\mu}(z) = \widehat{\mu}_3(z) / \widehat{\mu}_4(z)$ for all $z\in \bR^d$.
 \end{enumerate}
\end{theorem}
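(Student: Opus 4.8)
The plan is to prove the four statements equivalent by establishing the cycle $(i)\Rightarrow(ii)\Rightarrow(iii)\Rightarrow(iv)\Rightarrow(i)$, with the implication $(i)\Rightarrow(ii)$ carrying essentially all the weight; the remaining three implications are short and use only standard facts about infinitely divisible and compound Poisson distributions.

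For $(i)\Rightarrow(ii)$ I would first observe that, since $\mu$ is supported on $\bZ^d$, its characteristic function $\widehat\mu(z)=\sum_{n\in\bZ^d}\mu(\{n\})\,\re^{\ri\langle n,z\rangle}$ is $2\pi$-periodic in all coordinates and lies in $W^d$, because the Fourier coefficients $\mu(\{n\})$ are nonnegative and sum to $1$, hence are absolutely summable. By $(i)$ it is zero-free with $\widehat\mu(0)=1$, so it possesses a distinguished logarithm $\psi$ with $\psi(0)=0$ and $\re^{\psi}=\widehat\mu$. This $\psi$ need not itself be periodic, so the next step is to correct it: for each $j$ the map $z\mapsto\psi(z+2\pi e_j)-\psi(z)$ is continuous and takes values in $2\pi\ri\bZ$ (because $\widehat\mu$ is periodic), hence is a constant $2\pi\ri m_j$ with $m_j\in\bZ$. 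Setting $k=(m_1,\dots,m_d)$ and $\tilde\psi(z):=\psi(z)-\ri\langle k,z\rangle$ yields a continuous function that \emph{is} $2\pi$-periodic in all coordinates and satisfies $\re^{\tilde\psi(z)}=\re^{-\ri\langle k,z\rangle}\widehat\mu(z)$. Since the right-hand side is a zero-free element of $W^d$ and $\tilde\psi$ is its distinguished logarithm, Proposition \ref{prop1} applies and gives $\tilde\psi\in W^d$, say $\tilde\psi(z)=\sum_{n\in\bZ^d}c_n\re^{\ri\langle n,z\rangle}$ with $\sum_n|c_n|<\infty$.

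It then remains to read off the signed measure. From $\tilde\psi(0)=0$ we get $\sum_n c_n=0$, so $c_0=-\sum_{n\neq0}c_n$ and therefore $\tilde\psi(z)=\sum_{n\neq0}c_n(\re^{\ri\langle n,z\rangle}-1)$. To see that $\nu:=\sum_{n\neq0}c_n\delta_n$ is a genuine \emph{real} finite signed measure I would invoke the Hermitian symmetry $\widehat\mu(-z)=\overline{\widehat\mu(z)}$, valid because $\mu$ is a real measure; by uniqueness of the distinguished logarithm this forces $\psi(-z)=\overline{\psi(z)}$, hence $\tilde\psi(-z)=\overline{\tilde\psi(z)}$, and comparing Fourier coefficients gives $c_n=\overline{c_n}\in\bR$ for every $n$. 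Thus $\nu$ is a finite signed measure supported in $\bZ^d\setminus\{0\}$ and $\widehat\mu(z)=\exp\{\ri\langle k,z\rangle+\int_{\bR^d}(\re^{\ri\langle z,x\rangle}-1)\,\nu(\di x)\}$, which is $(ii)$. For $(ii)\Rightarrow(iii)$ I would take the Jordan decomposition $\nu=\nu_+-\nu_-$ into finite positive measures supported in $\bZ^d\setminus\{0\}$, normalise each to a probability measure, and read the two exponentials $\exp\{\int(\re^{\ri\langle z,x\rangle}-1)\,\nu_\pm(\di x)\}$ as characteristic functions of compound Poisson distributions $\mu_1,\mu_2$ with jump sizes in $\bZ^d\setminus\{0\}$ (adding a common auxiliary compound Poisson factor to numerator and denominator should one of $\nu_\pm$ vanish, so that both jump rates are positive); then $(iii)$ holds with the same $k$. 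For $(iii)\Rightarrow(iv)$, note that $\delta_k$ and every compound Poisson distribution are infinitely divisible, so $\mu_3:=\delta_k\ast\mu_1$ and $\mu_4:=\mu_2$ are infinitely divisible and satisfy $\widehat\mu=\widehat{\mu}_3/\widehat{\mu}_4$. Finally, $(iv)\Rightarrow(i)$ is immediate from the classical fact that the characteristic function of an infinitely divisible distribution never vanishes, so that $\widehat{\mu}_3$ and $\widehat{\mu}_4$ are zero-free and hence so is their quotient $\widehat\mu$.

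The main obstacle is concentrated in $(i)\Rightarrow(ii)$: the distinguished logarithm is only guaranteed to be continuous, not periodic, and pushing it into the Wiener algebra requires both the periodicity correction by the integer vector $k$ and Proposition \ref{prop1}. The subsequent verification that the Fourier coefficients are real—so that $\nu$ is an honest signed measure rather than a complex one—is the delicate bookkeeping step, which I would settle through the Hermitian symmetry of $\widehat\mu$.
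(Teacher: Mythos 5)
Your proof is correct, and up to the crucial step it follows the same route as the paper: membership of $\widehat{\mu}$ in $W^d$, the distinguished logarithm $\psi$, the periodicity correction $\widetilde{\psi}(z)=\psi(z)-\ri\langle k,z\rangle$ with $k\in\bZ^d$ obtained from continuity of $z\mapsto\psi(z+2\pi e_j)-\psi(z)\in 2\pi\ri\bZ$, Proposition \ref{prop1}, and the identification $\nu=\sum_{n\neq 0}c_n\delta_n$ using $c_0=-\sum_{n\neq0}c_n$; the implications (ii)$\Rightarrow$(iii)$\Rightarrow$(iv)$\Rightarrow$(i) are also handled as in the paper (your choice $\mu_4:=\mu_2$ is the right one; the paper's text contains the slip \lq\lq $\mu_4=\mu_3$\rq\rq). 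Where you genuinely diverge is the delicate point that the coefficients $c_n$ are real. The paper proves this by projection: for $a\in\bR^d$ the law of $a^TX$ inherits a L\'evy--Khintchine type representation with the finite complex measure $\nu_a$, the one-dimensional result \cite[Thm.~3.2]{Berger} forces $\nu_a$ to be real, and a direction $a$ with coordinates linearly independent over $\bQ$ makes $n\mapsto a^Tn$ injective on $\bZ^d\setminus\{0\}$, whence $\nu(\{n\})=\nu_a(\{a^Tn\})\in\bR$. You instead use Hermitian symmetry, and this does work: since $z\mapsto\overline{\psi(-z)}$ is continuous, vanishes at $0$ and exponentiates to $\overline{\widehat{\mu}(-z)}=\widehat{\mu}(z)$, uniqueness of the distinguished logarithm gives $\psi(-z)=\overline{\psi(z)}$; this symmetry passes to $\widetilde{\psi}$ because $k$ and $z$ are real; and for a function $f\in W^d$ the identity $f(-z)=\overline{f(z)}$ is equivalent to all Fourier coefficients being real (substitute $z\mapsto -z$ in $c_n=(2\pi)^{-d}\int_{[0,2\pi]^d}f(z)\,\re^{-\ri\langle n,z\rangle}\,\di z$ and use periodicity). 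So your argument is more elementary and self-contained than the paper's: it needs nothing beyond uniqueness of the distinguished logarithm and of Fourier coefficients, avoiding both the citation of Berger's theorem and the number-theoretic choice of direction. What the paper's heavier argument buys is the projection machinery \eqref{eq-CW1}--\eqref{eq-CW2}, which is reused verbatim in the proof of the Cram\'er--Wold device (Theorem \ref{thm-CW}, implication (iv)$\Rightarrow$(i)); with your proof those identities would still have to be derived there, but as a proof of the present theorem your argument is complete.
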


\begin{proof}
To see that (ii) implies (iii), denote by $\nu^+$ and $\nu^-$ the positive and negative part of $\nu$ in its Hahn--Jordan decomposition. Define $\nu_1 := \nu^+ + \delta_{e_1}$, $\nu_2 := \nu^- + \delta_{e_1}$, $\lambda_i := \nu_i(\bR^d) > 0$ and $\sigma_i := \nu_i / \lambda_i$ for $i=1,2$ (the reason why we add $\delta_{e_1}$ to $\nu^+$ and $\nu^-$ is that this way we guarantee that $\lambda_1,\lambda_2 > 0$). Then the $\sigma_i$ are probability measures supported in $\bZ^d \setminus \{0\}$. Denote by $\mu_i$ the compound Poisson distribution with jump rate $\lambda_i$ and jump size distribution $\sigma_i$. Then $\widehat{\mu_i}(z) = \exp \left\{ \int_{\bR^d} \left( \re^{\ri \langle z, x \rangle} - 1 \right) \, \lambda_i \sigma_i(\di x)\right\}$ and (iii) follows readily. That (iii) implies (iv) is clear by choosing $\mu_3 = \delta_k \ast \mu_1$ and $\mu_4 = \mu_3$, as is the implication that (iv) implies (i) since the characteristic function of an infinitely divisible distribution has no zeroes, cf. \cite[Lem. 7.5]{Sato}.

It remains to show that (i) implies (ii). Let $\mu= \sum_{n\in \bZ^d} a_n \delta_n$ with $0\leq a_n \leq 1$ and $\sum_{n\in \bZ^d} a_n = 1$. Then $\widehat{\mu}(z) = \sum_{n\in \bZ^d} a_n \re^{\ri \langle n, z\rangle }$ for $z\in \bR^d$ so that $\widehat{\mu} \in W^d$. Denote the distinguished logarithm of $\widehat{\mu}$ be $\psi$; it exists by assumption (i).

Since $\widehat{\mu}$ is $2\pi$-periodic in every coordinate, we obtain for every $j\in \{1,\ldots, d\}$
$$\exp \left( \psi (z + 2\pi e_j) \right)  =  \widehat{\mu} (z  + 2 \pi e_j)
 =  \widehat{\mu} (z)\\
=  \exp \left( \psi (z) \right).$$
It follows that $\psi(z+2\pi e_j) - \psi(z) \in 2 \pi \ri \bZ$ for all $z\in \bR^d$ and $j\in \{1,\ldots, d\}$. Since for each fixed $j$, the function $\bR^d \ni z \mapsto  \psi(z+2\pi e_j) - \psi(z)$ is continuous, we conclude that
\begin{align*}
k_j:=\frac{\psi(z + 2\pi e_j)-
\psi(z)}{2\pi \ri} \in \bZ
\end{align*}
does not depend on $z\in \bR^d$. Denote $k:= (k_1,\ldots, k_d)^T \in \bZ^d$ and define the function $\widetilde{\psi}:\bR^d \to \bC$ by $\widetilde{\psi}(z) = \psi(z) - \ri \langle k, z \rangle$. Then $\widetilde{\psi}$ is continuous with $\widetilde{\psi}(0) = 0$ and $\exp (\widetilde{\psi}(z)) = \widehat{\mu}(z) \exp (- \ri \langle k, z \rangle) = (\mu \ast \delta_{-k})\,\,\widehat{ { } }\; (z)$. Hence $\widetilde{\psi}$ is the distinguished logarithm of  $\bR^d \ni z \mapsto \sum_{n\in \bZ^d} a_{n+k} \re^{\ri \langle n, z \rangle}\in W^d$. Since
\begin{eqnarray*}
\widetilde{\psi}(z+2\pi e_j) - \widetilde{\psi}(z) & = & \psi(z+2\pi e_j) - \psi(z) - \ri \langle k , 2\pi e_j\rangle = 2\pi \ri k_j - 2\pi \ri k_j =  0
\end{eqnarray*}
by the definition of $k_j$ and $k$ we conclude that $\widetilde{\psi}$ is $2\pi$-periodic in all coordinates. An application of Proposition \ref{prop1} now shows that $\widetilde{\psi} \in W^d$.
This implies that there are $c_n \in \bC$, $n\in \bZ^d$, with $\sum_{n\in \bZ} |c_n| < \infty$ such that $\widetilde{\psi}(z) = \sum_{n\in \bZ^d} c_n \re^{\ri \langle n , z \rangle}$ for all $z\in \bR^d$. Since $\sum_{n\in \bZ^d} c_n = \widetilde{\psi}(0) = 0$ we have $c_0 = -\sum_{n\in \bZ^d\setminus \{0\}} c_n$. Define the (finite) complex measure $\nu$ by
$$\nu := \sum_{n\in \bZ^d \setminus \{0\}} c_n \delta_n.$$
Then $\nu$ is supported in $\bZ^d \setminus \{0\}$ and $\nu(\bR^d) = -c_0$, hence
$$\widetilde{\psi}(z) = c_0 + \sum_{n\in \bZ^d \setminus \{0\}} \int_{\bR^d} \re^{\ri \langle z , x \rangle} c_n \, \delta_n (\di x) = c_0 + \int_{\bR^d} \re^{\ri \langle z, x \rangle} \nu(\di x) =
\int_{\bR^d} \left( \re^{\ri \langle z , x \rangle} - 1 \right) \, \nu(\di x)$$
for all $z\in \bR^d$. This is the desired representation \eqref{eq-rep}, apart from the fact that we still have to show that $\nu$ is a signed measure, i.e. that the $c_n$ are real-valued. To see this, let $X$ be a random variable with distribution ${\mu}$ and $a=(a_1,\ldots, a_d)^T \in \bR^d$.  Denote by $\nu_a'$ the image measure of $\nu$ under the mapping $\bR^d \to \bR$, $x\mapsto \langle a, x\rangle = a^T x$ and define the measure $\nu_a$ on $\bR$ by $\nu_a(\di y) = \mathbf{1}_{\bR \setminus \{0\}}(y) \, \nu_a'(\di y)$, i.e. a potential mass of $\nu_a'$ at 0 is eliminated. Then the characteristic function $\bR \ni u \mapsto \varphi_{\langle a, X \rangle}(u)$ of $a^T X$ is given by
\begin{eqnarray}
\varphi_{\langle a, X\rangle}(u) & = & \varphi_X(ua) = \exp \left\{ \ri  k^T a u+ \int_{\bR^d} \left( \re^{\ri \langle ua, x\rangle} -1 \right) \, \nu(\di x)\right\} \nonumber \\
& = & \exp \left\{ \ri  k^T a u + \int_{\bR^d} \left( \re^{\ri u y } - 1 \right) \, \nu_a'(\di y) \right\} \nonumber\\
& = &  \exp \left\{ \ri  k^T a u + \int_{\bR^d} \left( \re^{\ri u y } - 1 \right) \, \nu_a(\di y) \right\} . \label{eq-CW1}
\end{eqnarray}
This shows that the characteristic function of the 1-dimensional distribution $\law(a^T X)$ has a L\'evy--Khintchine type representation with a finite complex measure $\nu_a$. But as shown in \cite[Thm. 3.2]{Berger}, this is only possible if $\nu_a$ is a signed measure (i.e. the imaginary part of the complex measure $\nu_a$ is zero). We conclude that $\nu_a$ is real valued for all $a\in \bR^d$. Now choose $a=(a_1,\ldots, a_d)^T \in \bR^d$ such that $a_1,\ldots, a_d$ are linearly independent over $\bQ$. Then $a^T m \neq a^T n\neq 0$ for all $n,m\in \bZ^d\setminus \{0\}$ with $n\neq m$ and we conclude for every $n\in \bZ^d \setminus \{0\}$ that
\begin{equation} \label{eq-CW2}
\nu(\{n\}) = \nu ( \{ m \in \bZ^d \setminus \{0\} : a^T m = a^T n\}) = \nu_a ( \{a^T n\}).
\end{equation}
Since $\nu_a$ is a signed measure as seen before, the right-hand side of \eqref{eq-CW2} is in $\bR$, hence so is the left-hand side. This shows that $\nu$ is a signed measure, finishing the proof.
\end{proof}

\begin{remark} \label{rem-qid}
{\rm In \cite[Rem. 2.4]{Lindner}, a probability distribution $\mu$ on $\bR^d$ is called \emph{quasi-infinitely divisible} if its characteristic function can be written as the quotient of the characteristic functions of two infinitely divisible distributions, and it is remarked that this is equivalent to the fact that there exists a L\'evy--Khintchine type representation of the characteristic function with a \lq signed L\'evy measure\rq,  termed \emph{quasi-L\'evy measure}. With these definitions, Theorem \ref{discrete} implies that a probability distribution $\mu$ on $\bZ^d$ is quasi-infinitely divisible if and only if its characteristic function has no zeroes on $\bR^d$, and if this is the case, the quasi-L\'evy measure $\nu$ of $\mu$ is finite and supported in $\bZ^d\setminus \{0\}$ and the \lq drift\rq~$k$ of $\mu$ is in $\bZ^d$. In dimension 1, such a result has been obtained in \cite[Thm. 8.1]{Lindner}, so that Theorem \ref{discrete} can be seen as a multivariate extension of this result. The proof of the implication \lq (i) $\Longrightarrow$ (ii)\rq~has some similarities with that of \cite[Thm. 8.1]{Lindner}
 in the sense that the Wiener--L\'evy theorem is used, but our argument for showing that the measure $\nu$ is indeed signed (i.e. real valued) is quite different from that in
\cite[Thm. 8.1]{Lindner} and the argument given there does not seem to generalise easily to higher dimensions.}
\end{remark}

\section{A Cram\'er--Wold device for infinite divisibility of $\bZ^d$-valued distributions} \label{S4}

We can now show the aforementioned Cram\'er--Wold device for infinite divisibility of $\bZ^d$-valued distributions. We give several characterisations, which show that one does not have to \lq test\rq~ in all directions, but only in some \lq nice\rq~directions, or if one knows a priori that the characteristic function is zero-free, then one has to test only in one specific direction. The precise statement is as follows:

\begin{theorem} \label{thm-CW}
Let $X$ be a $\bZ^d$-valued valued random vector with distribution $\mu$. Then the following are equivalent:
\begin{enumerate}
\item[(i)] $\mu$ is infinitely divisible.
\item[(ii)] $\law(a^T X)$ is infinitely divisible for all $a\in \bR^d$.
\item[(iii)] $\law(a^T X)$ is infinitely divisible for all $a\in \bN_0^d$.
\item[(iv)] The characteristic function $\widehat{\mu}$ of $\mu$ has no zeroes  on $\bR^d$ and there exists some $a=(a_1,\ldots, a_d)^T \in \bR^d$ such that $a_1,\ldots, a_d$ are linearly independent over $\bQ$ and such that $\law(a^T X)$ is infinitely divisible.
\item[(v)] There exists a sequence $(a(n))_{n\in \bN}$ of vectors in $\bR^d \setminus \{0\}$ such that the closure of $\{ |a(n)|^{-1} a(n) :n\in \bN\}$ has non-empty interior in the relative topology of the unit sphere $S^{d-1} = \{ x\in \bR^d: |x|=1\}$ and such that $\law(a(n)^T X)$ is infinitely divisible for all $n\in \bN$.
    \end{enumerate}
\end{theorem}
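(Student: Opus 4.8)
The plan is to prove all the equivalences by running the single cycle (i)$\Rightarrow$(ii)$\Rightarrow$(iii)$\Rightarrow$(v)$\Rightarrow$(iv)$\Rightarrow$(i). The implications (i)$\Rightarrow$(ii) and (ii)$\Rightarrow$(iii) are the easy direction: if $\mu$ is infinitely divisible then for every $a\in\bR^d$ the functional $a^TX$ is infinitely divisible, since $u\mapsto \widehat\mu(ua)$ is obtained from the L\'evy--Khintchine exponent of $\mu$ by the substitution $z=ua$, and (iii) is just (ii) restricted to $a\in\bN_0^d$. For (iii)$\Rightarrow$(v) I would take $(a(n))_{n\in\bN}$ to be an enumeration of $\bN_0^d\setminus\{0\}$: the directions $|a(n)|^{-1}a(n)$ are dense in the closed positive orthant $\{v\in S^{d-1}:v_j\ge 0\ \forall j\}$ of the sphere, whose relative interior $\{v\in S^{d-1}:v_j>0\ \forall j\}$ is nonempty and open, so the hypotheses of (v) hold.

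The core of the proof is (v)$\Rightarrow$(iv), and the main obstacle is to show that $\widehat\mu$ has no zeroes on $\bR^d$. First I would upgrade infinite divisibility along the $a(n)$ to infinite divisibility along the whole closure $K:=\overline{\{|a(n)|^{-1}a(n):n\in\bN\}}$: for $v\in D:=\{|a(n)|^{-1}a(n)\}$ the variable $v^TX=|a(n)|^{-1}a(n)^TX$ is infinitely divisible (a positive scaling of an infinitely divisible law), and for $v\in K$ I may pick $v_k\in D$ with $v_k\to v$, note $\varphi_{v_k^TX}(u)=\widehat\mu(uv_k)\to\widehat\mu(uv)=\varphi_{v^TX}(u)$ by continuity of $\widehat\mu$, and conclude by L\'evy's continuity theorem together with the fact that the class of infinitely divisible laws is closed under weak convergence (cf.\ \cite[Lem.~7.8]{Sato}) that $v^TX$ is infinitely divisible. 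In particular $\widehat\mu(uv)\neq 0$ for all $u\in\bR$ and all $v$ in the nonempty open set $U:=\mathrm{int}_{S^{d-1}}(K)$, i.e.\ $\widehat\mu$ is zero-free on the open cone $\{uv:u\in\bR,\ v\in U\}$.

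To pass from zero-freeness on an open cone to global zero-freeness I would exploit that $\widehat\mu$ is $2\pi$-periodic in each coordinate. Suppose the closed zero set $Z=\{z:\widehat\mu(z)=0\}$ were nonempty; since $\widehat\mu(0)=1$, any $z_0\in Z$ is nonzero, and $z_0+2\pi m\in Z$ for every $m\in\bZ^d$. For $d\ge 2$ the lattice directions $\{m/|m|:m\in\bZ^d\setminus\{0\}\}$ are dense in $S^{d-1}$, and since $(z_0+2\pi m)/|z_0+2\pi m|$ differs from $m/|m|$ by $O(1/|m|)$, the directions of the points $z_0+2\pi m$ are dense in $S^{d-1}$ as well; hence some $z_0+2\pi m$ lies in the open cone over $U$, contradicting $\widehat\mu\neq 0$ there. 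Thus $Z=\emptyset$ (the case $d=1$ is trivial, since there $U$ already contains $\pm1$ and so $\widehat\mu(u)\neq 0$ for all $u$). Finally, as the vectors with $\bQ$-linearly independent coordinates are dense in $S^{d-1}$, I can choose such a $v^*\in U\subseteq K$; then $(v^*)^TX$ is infinitely divisible, and together with the zero-freeness just proven this is exactly (iv).

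It remains to prove (iv)$\Rightarrow$(i), which reuses the machinery of Theorem~\ref{discrete}. Zero-freeness lets me apply that theorem to obtain $k\in\bZ^d$ and a finite signed measure $\nu$ on $\bZ^d\setminus\{0\}$ satisfying \eqref{eq-rep}. Writing $a=v^*$ for the $\bQ$-independent direction furnished by (iv), the computation \eqref{eq-CW1} shows that $\law(a^TX)$ admits a L\'evy--Khintchine type representation with the finite signed measure $\nu_a$ (the pushforward of $\nu$ under $x\mapsto a^Tx$, with any mass at $0$ deleted). Since $\law(a^TX)$ is infinitely divisible and $\nu_a$ is finite, comparing distinguished logarithms and invoking uniqueness of the quasi-L\'evy measure (cf.\ \cite{Berger,Lindner}) forces $\nu_a$ to coincide with the genuine L\'evy measure of $a^TX$, so in particular $\nu_a\ge 0$. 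Because $a$ has $\bQ$-linearly independent coordinates, the identity \eqref{eq-CW2} then gives $\nu(\{n\})=\nu_a(\{a^Tn\})\ge 0$ for every $n\in\bZ^d\setminus\{0\}$, so $\nu$ is a finite nonnegative measure; hence \eqref{eq-rep} is the characteristic function of a drift-$k$ plus compound Poisson law, and $\mu$ is infinitely divisible. The decisive step throughout is the zero-freeness argument in (v)$\Rightarrow$(iv); once it is in place, the positivity of $\nu$ drops out of the same one-dimensional reduction already used to prove Theorem~\ref{discrete}.
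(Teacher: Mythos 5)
Your proposal is correct and follows the paper's proof almost exactly: the same cycle (i)$\Rightarrow$(ii)$\Rightarrow$(iii)$\Rightarrow$(v)$\Rightarrow$(iv)$\Rightarrow$(i), the same upgrade from the sequence $(a(n))$ to the closure via weak-convergence closedness of the infinitely divisible class, and the identical reduction (iv)$\Rightarrow$(i) through Theorem \ref{discrete}, the pushforward $\nu_a$, uniqueness of the signed L\'evy measure, and \eqref{eq-CW2}. The one place you genuinely deviate is the zero-freeness argument inside (v)$\Rightarrow$(iv). The paper argues directly: the open cone over $U$ contains, far enough from the origin, a full period cube $v+[0,2\pi]^d$, and $2\pi$-periodicity of $\widehat{\mu}$ in each coordinate then transports zero-freeness from that cube to all of $\bR^d$. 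You instead argue by contradiction: a zero $z_0$ would generate the translated lattice $z_0+2\pi\bZ^d$ of zeroes, whose directions are dense in $S^{d-1}$ (for $d\ge 2$), so some zero would land in the cone over $U$. Both exploit periodicity in dual ways (cone swallows a fundamental domain vs.\ lattice orbit pierces the cone), and both are sound; the paper's version is slightly more economical since it needs no density lemma and no case split at $d=1$. Two cosmetic points in your version: the density claim requires lattice points of arbitrarily large norm near any prescribed direction, which your $O(1/|m|)$ estimate handles only after the scaling trick $m\mapsto km$ that you leave implicit; and in the case $d=1$ the nonempty $U\subseteq S^0$ need only contain one of $\pm 1$, not both, though this suffices because the cone is taken with $u\in\bR$ rather than $u>0$. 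Neither affects correctness.
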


\begin{proof}
That (i) implies (ii) is well-known, see e.g. Sato \cite[Prop. 11.10]{Sato}. That (ii) implies (iii) is clear. That (iii) implies (v) is easily seen from the fact that $\{ |a|^{-1} a : a \in \bN_0^d \setminus \{0\}\}$ is dense in $S^{d-1} \cap [0,\infty)^d$.

Let us show that (v) implies (iv). Write $b(n) := |a(n)|^{-1} a(n) \in S^{d-1}$, denote the closure of $\{b(n): n \in \bN\}$ by $C$ and the interior of $C$ in the relative topology of $S^{d-1}$ by $U$. Since $\law(a(n)^T X)$ is infinitely divisible for all $n\in \bN$, so is $\law(b(n)^T X)$ for each $n\in \bN$. Since $\law(c(n)^T X)$ converges weakly to $\law(c^T X)$ if the sequence $c(n)\in \bR^d$ converges to $c\in \bR^d$, and since the class of all infinitely divisible distributions is closed under weak convergence (e.g. \cite[Lem. 7.8]{Sato}), it follows that also $\law(c^T X)$ is infinitely divisible for all $c\in C$. Since the interior $U$ of $C$ in the relative topology of $S^{d-1}$ is non-empty, there exists some $a=(a_1,\ldots, a_d)^T \in U \subset S^{d-1}$ such that $a_1,\ldots, a_d$ are linearly independent over $\bQ$. Since $a\in U\subset C$ we  see that $\law(a^T X)$ is infinitely divisible. It remains to show that $\widehat{\mu}$ is zero-free. To see this, observe that with $\law(c^T X)$ being infinitely divisible for $c\in U$, also $\law(r c^T X)$ is infinitely divisible for all $r>0$. But since $U$ is non-empty and open in the relative topology of $S^{d-1}$, for large enough $R>0$ the set $U_R := \{r c : r \in (R,\infty) \subset \bR, b \in U\} \subset \bR^d$ will fully contain a cube of the form $Q := v + [0,2\pi]^d$ with some $v \in \bR^d$. Since $\law(z^T X)$ is infinitely divisible for all $z\in U_R$, its characteristic function is zero-free (cf. \cite[Lem. 7.5]{Sato}), in particular $\widehat{\mu}(z) = \varphi_X (z) = \varphi_{\langle z, X\rangle}(1) \neq 0$ for all $z\in U_R$. Since $U_R$ contains $Q$ for large enough $R$, we have $\widehat{\mu}(z) \neq 0$ for all $z\in Q$ and hence $\widehat{\mu}(z) \neq 0$ for all $z\in \bR^d$ since $\widehat{\mu}$ is $2\pi$-periodic in all coordinates. This is the condition (iv).

It remains to show that (iv) implies (i). Since $\widehat{\mu}(z) \neq 0$ for all $z\in \bR^d$, $\widehat{\mu}$ has the representation \eqref{eq-rep} with $k\in \bZ^d$ and the signed finite measure $\nu$ supported in $\bZ^d \setminus \{0\}$. Let $a=(a_1,\ldots, a_d)^T$ as in (iv). As in the proof of Theorem \ref{discrete}, denote by $\nu_a'$ the image measure of $\nu$ under the mapping $\bR^d \to \bR$, $x\mapsto a^T x$ and define $\nu_a(\di y) :=   \mathbf{1}_{\bR \setminus \{0\}} (y) \, \nu_a'(\di y)$ (actually, since the components of $a$ are linearly independent over $\bQ$ and $\nu$ is supported in $\bZ^d \setminus \{0\}$, it is easily seen that $\nu_a = \nu_a'$ here).  From Equation \eqref{eq-CW1} we see that $\law(a^T X)$ has  a L\'evy--Khintchine type representation with the finite \lq signed L\'evy measure\rq~$\nu_a$. But whenever a probability has such a representation with a signed L\'evy measure, then it is known that this signed L\'evy measure is unique, see e.g. Sato \cite[Exercise 12.2]{Sato}. But since $\law(a^T X)$ is infinitely divisible by assumption, we see that $\nu_a$ must actually be a L\'evy measure, so a non-negative measure and it follows that $\nu_a(B) \geq 0$ for all Borel subsets $B$ of $\bR$. In particular, $\nu_a (\{a^T n\}) \geq 0$ for all $n\in \bZ^d \setminus \{0\}$. Using \eqref{eq-CW2} it then follows that $\nu(\{n\}) \geq 0$ for all $n\in \bZ^d \setminus \{0\}$, showing that $\nu$ is a L\'evy measure. The representation \eqref{eq-rep} is then a true L\'evy--Khintchine representation of the characteristic function of $\mu$, showing that $\mu$ is infinitely divisible.
\end{proof}

\begin{remark}
{\rm
(a) Let  $\mu=\law(X)$ be an $\bN_0^d$-valued distribution. By Theorem  \ref{thm-CW}, $\mu$ is infinitely divisible if and only if $\law(a^T X)$ is infinitely divisible for all $a\in \bN_0^d$. Since  $a^T X$ is $\bN_0$-valued, known characterisations or sufficient conditions for infinite divisibility of  $\bN_0$-valued random variables can be applied such as Katti's criterion (cf. \cite[Cor. 51.2]{Sato}) or log-convexity of the counting density (cf. \cite[Thm. 51.3]{Sato}) to check if $\law(a^T X)$ and hence $\law(X) = \mu$ are infinitely divisible.\\
(b) Let $Y$ be a $\bZ^d$-valued random vector, $A$ an invertible $\bR^{d\times d}$ matrix and $v\in \bR^d$. Denote $X:= A Y + v$ Then it is well known that $\law(X)$ is infinitely divisible if and only if $\law(Y)$ is. Since $a^T Y = (a^T A) X + a^T v$, it is easily seen that the characterisations of Theorem \ref{thm-CW} carry over word by word to $X=AY+v$, i.e. to distributions concentrated on the lattice $v + A \bZ^d$.
}
\end{remark}

\end{document}